\theoremstyle{plain}
\newtheorem{corollary}{Corollary}
\newtheorem{definition}{Definition}
\newtheorem{proposition}{Proposition}
\newtheorem{theorem}{Theorem}
\numberwithin{equation}{section}
\begin{document}
\title[Property $\widetilde{\Omega }$]{Stein manifolds $M$ for which $%
O\left( M\right) $ has the property $\widetilde{\Omega }$}
\author{Aydin Aytuna }
\address{MDBF Sabanci University, Orhanli, 34956 Istanbul, Turkey}
\email{aytuna@sabanciuniv.edu}
\date{May 23, 2013}
\subjclass[2000]{ 46A63, 46E10, 32U05}
\keywords{Property $\widetilde{\Omega },$ Fr\'{e}chet spaces of analytic
functions}
\dedicatory{Dedicated to Mikhail Mikhaylovich Dragilev on the occation of
his 90 th birthday}

\begin{abstract}
In this note, we consider the linear topological invariant $\widetilde{%
\Omega }$\ \ for Fr\'{e}chet spaces of global analytic functions on Stein
manifolds. We show that $O\left( M\right) ,$ for a Stein manifold $M,$
enjoys the property $\widetilde{\Omega }$ \ if and only if every compact
subset of $M$ lies in a relatively compact sublevel set of a bounded
plurisubharmonic function defined on $M.$ We also look at some immediate
implications of this characterization.
\end{abstract}

\maketitle

\section{Introduction}

Spaces of analytic functions, regarded as an important class of nuclear Fr%
\'{e}chet spaces contributed amply to the development of the structure
theory of Fr\'{e}chet spaces. A profound example is the pioneering result of
Dragilev \cite{DRA1} on the absoluteness of bases in the space of analytic
functions on the unit disc with the usual topology. This paved the way to
the far-reaching theorem of Dynin-Mitiagin \cite{DM} on the absoluteness of
bases in \textit{every} nuclear Fr\'{e}chet space. Many more examples could
readily be provided. Of course this influence has not been one-sided.
Techniques and concepts from functional analysis were extensively used in
complex analysis. Advances in the structure theory of Fr\'{e}chet spaces,
found some applications in the Mitiagin-Henkin \cite{MH} program on the
linearization of basic results of the theory of analytic functions. (See,
for example \cite{AY1},\cite{ZA1},\cite{AY2}). In order to use the results
of the structure theory of Fr\'{e}chet spaces effectively it is imperative
to analyze the complex analytic properties shared by the complex manifolds
whose analytic function spaces possess a common linear topological
invariant. The present note is written from this perspective and aims to
characterize Stein manifolds whose analytic function spaces possess the
property $\widetilde{\Omega }$ of Vogt \cite{V1}. (See section 1 for the
definition)

Throughout this note we will denote the space of analytic functions on a
Stein manifold $M$ with the compact-open topology by $O\left( M\right) $.

In the first section we compile some background material for the linear
topological invariant $\widetilde{\Omega }$.

The second section is devoted to the proof of the characterization of Stein
manifolds $M$ for which $O\left( M\right) $ has the property $\widetilde{%
\Omega }$ as those manifolds with the property that every compact set of $M$
lie in a precompact sublevel set of a suitably chosen bounded
plurisubharmonic function (on $M)$.

Considering the class of Stein manifolds $M$ for which $O\left( M\right) $
has the property $\widetilde{\Omega },$ in the third section we show, as an
immediate corollary of the characterization theorem, the existence of
pluricomplex Green functions with certain special properties for this class.
The note ends with two examples among bounded domains in $\mathbb{C},$ one
in the class and one not in the class.

The manifolds considered in this note are always assumed to be connected. We
will use the standard terminology and results from functional analysis and
complex potential theory, as presented in \cite{V2} and \cite{K}
respectively. Throughout this note, the notation $\subset \subset $ will be
used to denote relatively compact containments.

\section{The linear topological invariant $\widetilde{\Omega }$}

In this section we give some background material on the linear topological
invariant $\widetilde{\Omega }.$

\begin{definition}
(Vogt \cite{V1}) Let $E$ be a Fr\'{e}chet space with a generating system of
seminorms $\left( \left\Vert .{}\right\Vert _{k}\right) _{k}$. $E$ is said
to have the property $\widetilde{\Omega },$ in case :

\begin{equation*}
\forall p\text{ }\exists \text{ }q,\text{ }d>0,\text{ }\forall k\text{ }%
\exists \text{ }C>0\text{ }\forall \varphi \epsilon E^{\ast }:\left\Vert
\varphi \right\Vert _{q}^{\ast }\text{ }\leq C\left( \left\Vert \varphi
\right\Vert _{p}^{\ast }\right) ^{\frac{d}{1+d}}\left( \left\Vert \varphi
\right\Vert _{k}^{\ast }\right) ^{\frac{1}{1+d}}
\end{equation*}
where $\left( \left\Vert .{}\right\Vert _{k}^{\ast }\right) _{k}$ are the
seminorms dual to $\left( \left\Vert .{}\right\Vert _{k}\right) _{k}.$
\end{definition}

\bigskip

Note that this property does not depend on the generating semi-norm system.
If $E$ is a nuclear Fr\'{e}chet space, it turns out that the conditions
below are also equivalent to the condition given in the definition of the
property $\widetilde{\Omega }:$

\bigskip

\begin{itemize}
\item There exists a closed bounded absolutely convex set $B$ in $E$ :%
\begin{equation*}
\forall p\text{ }\exists \text{ }q,\text{ }d>0,\text{ }\exists \text{ }C>0%
\text{ }\forall \varphi \epsilon E^{\ast }:\left\Vert \varphi \right\Vert
_{q}^{\ast }\text{ }\leq C\left( \left\Vert \varphi \right\Vert _{p}^{\ast
}\right) ^{\frac{d}{1+d}}\left( \left\Vert \varphi \right\Vert _{B}^{\ast
}\right) ^{\frac{1}{1+d}}
\end{equation*}

\item There exists a closed bounded absolutely convex set $B$ in $E$ :%
\begin{eqnarray*}
\forall p\text{ }\exists \text{ }q,\text{ }d &>&0,\text{ }C>0,\text{ such
that for all }r>0: \\
U_{q} &\subseteq &CrB+\frac{1}{r^{d}}U_{p}
\end{eqnarray*}

\item $\forall p$ $\exists $ $q,$ $d>0,$ $\forall k$ $\exists C>0,$ such
that for all $r>0:$%
\begin{equation*}
U_{q}\subseteq CrU_k+\frac{1}{r^{d}}U_{p}
\end{equation*}
where $U_{s}$ denotes the unit ball of the seminorm $\left\Vert
{}\right\Vert _{s}$, $s=1,2,...$ (see \cite{VW}, \cite{DMV}).
\end{itemize}

\bigskip

This property is stronger than $\Omega ,$ and is weaker than $\overline{%
\text{ }\Omega }$ conditions of Vogt, and as with all $\Omega $- type
invariants, is inherited by quotients \cite{V1}. This invariant plays an
important role in investigations of finding "non-polar" bounded sets in
nuclear Fr\'{e}chet spaces initiated by a question of P.Lelong. We refer
reader to \cite{DMV} for details on this matter.

Another interesting feature of nuclear Fr\'{e}chet spaces with the property $%
\widetilde{\Omega }$ is that continuous linear operators from such a space
into a nuclear weakly stable infinite type power series space are
necessarily compact \cite{V1}. In particular nuclear weakly stable infinite
type power series spaces, e.g. $O\left( M\right) ,$ for parabolic Stein
manifolds $M$ \cite{AS}, cannot not have the property $\widetilde{\Omega }. $
More generally we have,

\begin{proposition}
Let $X$ be a nuclear Fr\'{e}chet space. If the diametral dimension is equal
to the diametral dimension of an nuclear weakly stable infinite type power
series space the $X$ cannot have the property $\widetilde{\Omega }.$
\end{proposition}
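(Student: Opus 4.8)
The plan is to argue by contradiction using the characterization of $\widetilde{\Omega}$ via the diametral dimension (which is a linear topological invariant, hence depends only on the diametral dimension up to the obvious coincidences). Concretely, suppose $X$ is nuclear, has property $\widetilde{\Omega}$, and has diametral dimension $\Delta(X)$ equal to $\Delta(\Lambda_\infty(\alpha))$ for some nuclear weakly stable infinite type power series space $\Lambda_\infty(\alpha)$. I want to derive a contradiction from the fact (cited from Vogt \cite{V1} in the excerpt) that any continuous linear map from a nuclear Fréchet space with $\widetilde{\Omega}$ into a nuclear weakly stable infinite type power series space is compact. So the first step is to reduce the statement to the case $X = \Lambda_\infty(\alpha)$ itself, or at least to produce a nontrivial non-compact operator $X \to \Lambda_\infty(\beta)$ for some suitable $\beta$.

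The key technical tool is the known description of when two nuclear Fréchet spaces have the same diametral dimension and, more importantly, the classical result that a nuclear power series space of infinite type is determined (within the power series scale) by its diametral dimension: $\Delta(\Lambda_\infty(\alpha)) = \Delta(\Lambda_\infty(\beta))$ forces $\alpha$ and $\beta$ to be \emph{equivalent} exponent sequences in the sense that each is $O$ of a bounded multiple of a rearrangement of the other. Second step: using $\Delta(X) = \Delta(\Lambda_\infty(\alpha))$ together with the Dragilev--Mitiagin--Vogt machinery (the factorization characterizations in \cite{DMV}, \cite{VW}), one shows that $\Lambda_\infty(\alpha)$ is (isomorphic to) a subspace or a quotient of $X$, or conversely, in a way that yields an essentially surjective continuous linear map $T: X \to \Lambda_\infty(\alpha)$ with dense, non-closed-if-noncompact range. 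Third step: invoke Vogt's compactness theorem to conclude $T$ is compact, hence $\Lambda_\infty(\alpha)$ would be Montel-embeddable as a quotient, contradicting that $\Lambda_\infty(\alpha)$ is infinite-dimensional (an infinite type power series space is never Schwartz-trivial, i.e.\ has no continuous norm making it Montel-finite). This contradiction proves $X$ cannot have $\widetilde{\Omega}$.

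I expect the main obstacle to be the second step: producing the right operator purely from the equality of diametral dimensions. The diametral dimension records the asymptotics of Kolmogorov diameters $\delta_k(U_q, U_p)$ but does \emph{not} by itself give a morphism between the spaces; one needs the stability/weak stability of $\alpha$ to run a diagonal-operator construction (choosing, for each step of the seminorm system of $X$, appropriate vectors realizing the diameter asymptotics of $\Lambda_\infty(\alpha)$ and assembling them into a bounded linear map), and this is exactly where weak stability of the exponent sequence is used. A cleaner route, which I would try first, is to avoid constructing the operator and instead argue at the level of invariants directly: property $\widetilde{\Omega}$ for a nuclear space is itself expressible through the behavior of the diameters (the last bulleted characterization, $U_q \subseteq C r U_k + r^{-d} U_p$, translates into an estimate $\delta_k(U_q,U_p) \lesssim \delta_k(U_q,U_k)^{1/(1+d)}$-type bound), while infinite type weak stability forces the diameters to decay so fast that such an estimate fails; carrying this through would make the proof self-contained and would localize the difficulty to a single asymptotic inequality about the sequence $\alpha$. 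Either way, the crux is translating the hypothesis ``same diametral dimension'' into something that interacts with the $\widetilde{\Omega}$ inequality, and I would organize the write-up around that translation.
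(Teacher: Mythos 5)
Your fallback plan (argue directly at the level of diameters) is indeed the direction the paper takes, but as written both of your routes stop exactly at the step you yourself flag as the crux, and that step is the whole content of the proof. The first route cannot be repaired as stated: the equality $\Delta(X)=\Delta(\Lambda_{\infty}(\alpha))$ is a set-theoretic coincidence of sequence spaces and does not produce any nontrivial continuous operator between $X$ and $\Lambda_{\infty}(\alpha)$, let alone exhibit $\Lambda_{\infty}(\alpha)$ as a subspace or quotient of $X$ (the machinery of \cite{DMV}, \cite{VW} does not give this); so Vogt's compactness theorem from \cite{V1} never gets an operator to act on. The second route is sketched without the one missing idea: a mechanism converting the set equality $\Delta(X)=\Delta(\Lambda_{\infty}(\alpha))$ into two-sided quantitative estimates on the Kolmogorov diameters $d_{n}(U_{q},U_{p})$ of $X$ itself — $X$ is an arbitrary nuclear Fr\'echet space, so its diameters are not those of $\Lambda_{\infty}(\alpha)$, and your proposed ``single asymptotic inequality about $\alpha$'' cannot even be formulated without that translation.

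For comparison, the paper supplies the translation as follows. After adapting the seminorms so that $q=p+1$ works in $\widetilde{\Omega}$, it forms the Fr\'echet sequence space $F=\{(x_{n}):\sup_{n}|x_{n}|\,d_{n}(U_{p+1},U_{p})<\infty\ \forall p\}\subseteq\Delta(X)=\{(x_{n}):\exists R\ge 1,\ \sup_{n}|x_{n}|R^{\alpha_{n}}<\infty\}$; Grothendieck's factorization theorem then yields one $R_{0}$ with $\limsup_{n}\frac{-\ln d_{n}(U_{p+1},U_{p})}{\alpha_{n}}\le\ln R_{0}$ for all $p$. In the other direction, the natural topology on $\Delta(X)$ (\cite{TT1}) makes the inclusion $\Delta(X)\subseteq\Delta(\Lambda_{\infty}(\alpha))$ continuous, giving for \emph{every} $R\ge 1$ and $p$ some $q$ with $\ln R\le\liminf_{n}\frac{-\ln d_{n}(U_{q},U_{p})}{\alpha_{n}}$; it is this arbitrariness of $R$ (infinite type), not a ``too fast decay of diameters'', that drives the contradiction. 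Finally, $\widetilde{\Omega}$ is used in its bounded-set form $U_{p+1}\subseteq CrB+r^{-d}U_{p}$, which after optimizing in $r$ gives $-\ln d_{n}(B,U_{p})\le(1+d)\bigl(-\ln d_{n}(U_{p+1},U_{p})\bigr)+C$, and since $B$ is bounded, $d_{n}(U_{q},U_{p})\le C_{q}\,d_{n}(B,U_{p})$ for every $q$; chaining these yields $\ln R\le(1+d)\ln R_{0}$ for all $R$, which is absurd. Note also that your proposed diameter form of $\widetilde{\Omega}$, an estimate of the type $\delta_{k}(U_{q},U_{p})\lesssim\delta_{k}(U_{q},U_{k})^{1/(1+d)}$, is not the inequality one needs: the essential point is interpolation against a \emph{fixed} bounded set $B$, which caps the decay of all $d_{n}(U_{q},U_{p})$ simultaneously by a fixed power of $d_{n}(U_{p+1},U_{p})$.
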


\begin{proof}
Suppose that $X$ has the property $\widetilde{\Omega }$ and assume that the
diametral dimension of $X,$ $\Delta \left( X\right) ,$ satisfies $\Delta
\left( X\right) =\Delta \left( \Lambda _{\infty }\left( \alpha _{n}\right)
\right) $ for some nuclear weakly stable exponent sequence $\left( \alpha
_{n}\right) .$ Choose a generating seminorm system $\left( \left\Vert
.{}\right\Vert _{k}\right) _{k}$ so that $p+1$ is the index $q$ assigned to $%
p $ by the condition $\widetilde{\Omega }$.

Let $F\circeq \left\{ \left( x_{n}\right) :\sup \left\vert x_{n}\right\vert 
\text{ }d_{n}\left( U_{p+1},U_{p}\right) <\infty ,\forall p\right\} $ with
the natural Fr\'{e}chet space structure.

Since $F$ is in $\Delta \left( X\right) ,$ and $\Delta \left( \Lambda
_{\infty }\left( \alpha _{n}\right) \right) =\left( \left( x_{n}\right)
:\exists \text{ }R\geq 1,\text{ }\sup \left\vert x_{n}\right\vert R^{\alpha
_{n}}<\infty \right) ;$ in view of Grothendieck factorization theorem there
is an $R_{0}$ such that 
\begin{equation*}
\forall p\text{ , \ \ }\lim \sup_{n}\frac{-\ln d_{n}\left(
U_{p+1},U_{p}\right) }{\alpha _{n}}\leq \ln R_{0}\text{. }
\end{equation*}

On the other hand considering the usual topology on $\Delta \left( X\right)$ 
\cite{TT1}, which represents it as a projective limit of inductive limit of
Banach spaces, the continuous inclusion $\Delta \left( X\right) \subseteq
\Delta \left( \Lambda _{\infty }\left( \alpha _{n}\right) \right) $ gives:%
\begin{equation}
\forall R\geq 1\text{ and }p\text{ }\exists \text{ }q\text{, }%
C>0:\sup_{n}R^{\alpha _{n}}d_{n}\left( U_{q},U_{p}\right) \leq C.  \notag
\end{equation}

In particular : 
\begin{equation*}
\forall R\geq 1\text{ and }p\text{ }\exists \text{ }q\text{ :}\ln R\leq \lim
\inf_{n}\frac{-\ln d_{n}\left( U_{q},U_{p}\right) }{\alpha _{n}}
\end{equation*}

We now utilize the condition $\widetilde{\Omega }$, which in our notation,
reads as: There exists a closed bounded set $B\subseteq X$ such that:

\begin{eqnarray*}
\forall p\text{ }\exists \text{ }d &>&0\text{ },\text{ }C>0,\text{ such that
for all }r>0: \\
U_{p+1} &\subseteq &CrB+\frac{1}{r^{d}}U_{p}.
\end{eqnarray*}

Following the argument given in \cite{TT1}, we arrive at the estimate:

\begin{equation*}
\forall p\text{ }\exists \text{ }d>0\text{ },\text{ }C>0\text{, }-\ln
d_{n}\left( B,U_{p}\right) \leq \left( 1+d\right) \left( -\ln d_{n}\left(
U_{p+1},U_{p}\right) \right) +C\text{, \ \ }n=1,2,...\text{.}
\end{equation*}

Lets fix a $p$ and choose an $R>>\left( R_{0}\right) ^{\left( 1+d\right) }$
where $d$ is the constant appearing in the above equation$.$ Putting all the
above implications together, we get; 
\begin{align*}
\ln R&\leq \lim \inf_{n}\frac{-\ln d_{n}\left( U_{q},U_{p}\right) }{\alpha
_{n}}\leq \lim \inf_{n}\frac{-\ln d_{n}\left( B,U_{p}\right) }{\alpha _{n}}
\\
&\leq \lim \inf_{n}\frac{\left( 1+d\right) \left( -\ln d_{n}\left(
U_{p+1},U_{p}\right) \right) }{\alpha _{n}}\leq \left( 1+d\right) \ln R_{0}.
\end{align*}

This contradiction finishes the proof of the proposition.
\end{proof}

\bigskip

We would like to finish this section by making some immediate observations,
in view of the things said above, about the class of Stein manifolds whose
analytic function spaces have the property $\widetilde{\Omega }.$ Smoothly
bounded domains of holomorphy in $\mathbb{C}^{n},$ complete bounded Reinhard
domains, more generally hyperconvex Stein manifolds belong to this class
since their analytic function spaces possess a stronger property $\overline{%
\Omega }$ \cite{ZA2} \cite{AY3}. On the other hand $\mathbb{C}^{d}$, $%
d=1,2,..$, or more generally, parabolic Stein manifolds do not belong to
this class \cite{AS}.

\bigskip

\section{Main result}

In this section we give a characterization of Stein manifolds $M$ for which $%
O\left( M\right) $ has the property $\widetilde{\Omega }.$

\begin{theorem}
Let $M$ be a Stein manifold. The Fr\'{e}chet space $O\left( M\right) $ has
the property $\widetilde{\Omega }$ if and only if for every compact subset $K
$ of $M$ there exists a negative plurisubharmonic function $\varphi $ on $M$
and a $\alpha <0$ such that%
\begin{equation*}
K\subset \left( z\epsilon M:\varphi \left( z\right) <\alpha \right) \subset
\subset M.
\end{equation*}
\end{theorem}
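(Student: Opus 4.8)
The plan is to translate the functional-analytic condition $\widetilde{\Omega}$ for $O(M)$ into the potential-theoretic condition about sublevel sets via the standard dictionary relating the compact-open topology on $O(M)$ to plurisubharmonic weights. First I would fix an exhaustion $K_1 \subset\subset K_2 \subset\subset \cdots$ of $M$ by holomorphically convex compact sets and use the sup-norms $\Vert f\Vert_{K_k}$ as a generating seminorm system, so that $U_k$ is (essentially) the polar of $K_k$ and the Banach space $O(M)/\ker\Vert\cdot\Vert_{K_k}$ embeds into $C(K_k)$. The dual seminorm $\Vert\varphi\Vert_k^\ast$ is then the norm of $\varphi$ as a functional on this space, i.e. (up to the Oka--Weil/approximation issues) an analytic functional carried by $K_k$. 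The key classical input is the link between the quotient $U_q \subseteq Cr\,U_k + r^{-d}U_p$ formulation of $\widetilde{\Omega}$ and estimates for Kiselman-type minima or, more directly, for the pluricomplex Green/extremal function: membership of $f$ in $rU_k + r^{-d}U_p$ for all $r$ corresponds to a logarithmically convex interpolation between the sup-norm on $K_k$ and on $K_p$, and such families of estimates for $O(M)$ are governed by a bounded plurisubharmonic function whose sublevel sets interpolate between a neighborhood of $K_p$ and $M$.

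The forward direction ($\widetilde{\Omega}$ $\Rightarrow$ sublevel sets): given a compact $K$, put $K = K_p$ for some $p$ in the exhaustion, take the $q$ and $d$ supplied by $\widetilde{\Omega}$, and for each further level $k$ a constant $C_k$ with $U_q \subseteq C_k r\, U_k + r^{-d}U_p$. I would extract from this a function of the form $\varphi(z) = \limsup$ (regularized) of $\frac{1}{1+d}\log\frac{|f(z)|}{\Vert f\Vert_{K_q}}$ taken over $f$ with controlled sup-norm on $K$, or more cleanly build $\varphi$ as the (upper regularization of the) relative extremal function / a Green-type function associated with the pair $(K, M)$ whose sublevel set $\{\varphi < \alpha\}$ for suitable $\alpha<0$ is forced to be relatively compact precisely because the $\widetilde{\Omega}$ inequality bounds how fast $|f|$ can grow away from $K_q$ relative to $\Vert f\Vert_{K_k}$ for all $k$. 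Boundedness of $\varphi$ below comes from $\varphi \geq -1$ after normalization; the crucial point $\{\varphi<\alpha\}\subset\subset M$ is what the quantitative $\widetilde{\Omega}$ estimate buys us, since if that sublevel set met every $K_k^c$ we could produce functions violating the interpolation inequality.

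For the converse, given the family of negative plurisubharmonic functions $\varphi_K$ with $K \subset \{\varphi_K < \alpha\} \subset\subset M$, I would use Hörmander's $L^2$ estimates / the Ohsawa--Takegoshi-type machinery (or simply the standard construction producing, from a bounded psh exhaustion-type weight, a Hilbert space of analytic functions) to manufacture, for each $p$, an intermediate sup-type norm and a bounded set $B$ realizing the inclusion $U_{q} \subseteq Cr B + r^{-d} U_p$: the weight $e^{-N\varphi_K}$ for large $N$ lets one solve $\bar\partial$ with bounds that split a function bounded on $K_q$ into a piece small on a fixed neighborhood of $K$ plus a piece in $r^{-d}U_p$, the exponent $d$ coming from the gap $\alpha$ between the level set and $M$. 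Here $B$ can be taken as the unit ball of the weighted $L^2$-norm with weight tied to $\varphi_K$, which is bounded in $O(M)$ because $\varphi_K$ is bounded.

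The main obstacle I expect is the forward direction: extracting an honest bounded plurisubharmonic function on all of $M$ from the purely Banach-space interpolation inequalities, and in particular proving that its sublevel set is relatively compact rather than merely ``small'' in a weak sense. This requires carefully choosing which envelope of holomorphic functions to take the $\log$ of, controlling the upper semicontinuous regularization, and using the full strength of the ``$\forall k\ \exists C$'' clause (not just a single $k$) to rule out the sublevel set escaping to infinity; the parameter $d$ must be shown to be usable uniformly. The converse is more routine modulo invoking the $L^2$-existence theorems, though one still has to check that the three equivalent reformulations of $\widetilde{\Omega}$ listed in Section 1 are genuinely the ones being verified and that nuclearity of $O(M)$ is in force so that those reformulations apply.
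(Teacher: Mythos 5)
Your converse direction is, in substance, the paper's own argument: the paper rescales the given negative plurisubharmonic function to $\rho _{t}=-\frac{t}{\alpha _{0}}\rho +t$, splits an $f$ with $\left\Vert f\right\Vert _{k_{1}}\leq 1$ on the overlap of two open sets as $f=f_{+}+f_{-}$ with weighted $L^{2}$ bounds $\int \left\vert f_{\pm }\right\vert ^{2}e^{-\rho _{t}}d\varepsilon \leq Ce^{-\lambda t}$ (Lemma 1 of \cite{AY3}, i.e. exactly the $\bar{\partial}$/$L^{2}$ machinery you invoke), and reads off $U_{k_{1}}\subseteq \frac{1}{r^{\lambda /(1-\lambda )}}U_{k_{0}}+CrB$ with $B$ the unit ball of a fixed global $L^{2}$ norm; this is one of the bounded-set reformulations of $\widetilde{\Omega }$ valid for nuclear spaces, so your sketch with the weight tied to $\varphi _{K}$ would go through essentially as in the paper.

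The forward direction, however, contains a genuine gap, which you yourself flag as the main obstacle: you never supply the mechanism that converts the dual-norm interpolation inequality into relative compactness of a sublevel set of an actual plurisubharmonic function on $M$. The paper's mechanism is specific: for $K_{n_{0}}=\overline{D_{n_{0}}}$ and $D_{n_{0}+m}$ it uses Zaharyuta's Hilbert scale $H_{m}\hookrightarrow H_{0}$, the common orthogonal basis $\left( e_{n}\right) $ with $d_{n}=-\ln \lambda _{n}$, and the representation $1+\omega _{n_{0}+m}\left( z\right) =\limsup_{\xi \rightarrow z}\limsup_{n}\frac{\ln \left\vert e_{n}\left( \xi \right) \right\vert }{d_{n}}$ of the relative extremal function. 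Applying $\widetilde{\Omega }$ to the coordinate functionals gives $\left[ e_{n}^{\ast }\right] _{n_{1}}^{\ast }\leq Ce^{-\frac{d}{1+d}d_{n}}$, Hartogs' lemma gives $\left\vert e_{n}\right\vert \leq Ce^{\beta d_{n}}$ on $\Gamma _{\beta }$ for $\beta <\frac{d}{1+d}$, and summing the basis expansion yields $\left\vert f\right\vert _{\Gamma _{\beta }}\leq C_{1}\left\vert f\right\vert _{K_{n_{0}+n_{1}+1}}$ for all $f\in O\left( M\right) $; the power trick makes $C_{1}=1$ and holomorphic convexity of $K_{n_{0}+n_{1}+1}$ then forces the set containment $\Gamma _{\beta }\subseteq \overline{D_{n_{0}+n_{1}+1}}$, uniformly in $m$ because $n_{1}$, $d$, $\beta $ do not depend on $m$ (only the irrelevant constant does). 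Only then can one pass to the decreasing limit $\omega ^{n_{0}}=\lim_{m}\omega _{n_{0}+m}$ and obtain a negative plurisubharmonic function on all of $M$ with a precompact sublevel set containing $K_{n_{0}}$. Your proposal gestures at taking "the relative extremal function of $(K,M)$" or a regularized $\limsup $ of normalized $\log \left\vert f\right\vert $, but a priori that function may have no precompact sublevel sets at all (this is precisely what fails for $\mathbb{C}$ or the punctured disc), and nothing in your sketch estimates it from the Banach-space data; without the common-basis representation (or some substitute converting the dual estimates on functionals into sup-norm bounds on sublevel sets, followed by the hull argument) the forward implication is not proved.
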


\begin{proof}
Throughout the proof we will use the notation of Lemma 1 of \cite{AY3}. To
this end we fix a hermitian metric on $M$, and denote by $d\varepsilon $ the
measure $cd\mu $ where $\mu $ is the measure (equivalent to the volume form)
and $c$ is the positive continuous function, respectively, of Lemma 1 \cite%
{AY3}. We also choose a $C^{\infty }$ strictly plurisubharmonic exhaustion
function $\sigma $ of $M$ and let, 
\begin{equation*}
D_{n}\circeq \left( z\epsilon M:\sigma \left( z\right) <n\right) \text{, }%
n=1.2....\text{.}
\end{equation*}

$\left( \Rightarrow \right) $ It suffices to show that each $K_{n}\circeq 
\overline{D_{n}}$, $n=1,2,...$, is contained in a relatively compact
sub-level set of a bounded plurisubharmonic function. To this end fix a $%
K_{n_{0}}.$ Choose, as in \cite{ZA3} (\cite{ZA1}), a Hilbert space $\left(
H_{0},{\lbrack .{}\rbrack_{0}}\right) $ with continuous injections, 
\begin{equation*}
O\left( K_{n_{0}}\right) \hookrightarrow H_{0}\hookrightarrow AC\left(
K_{n_{0}}\right)
\end{equation*}
where $O\left( K_{n_{0}}\right) $ denotes the germs of analytic functions on 
$K_{n_{0}}$ with the usual inductive limit topology and $AC\left(
K_{n_{0}}\right) $ denotes the closure, in $C\left( K_{n_{0}}\right) ,$ of
the restriction of $O\left( K_{n_{0}}\right) $ to $K_{n_{0}}.$ For $n>n_{0}$%
, the pair $\left\{ K_{n_{0}},D_{n}\right\} $ is a regular pair in the sense
of \cite{ZA3} and hence the relative extremal function 
\begin{equation*}
\omega _{n}\left( z\right) \circeq \sup \left\{ u\left( z\right) :u\epsilon
PSH\left( D_{n}\right) ,\text{ }u\leq -1\text{ on }K_{n_{0}}\text{ and }%
u\leq 0\text{ on }D_{n}\right\}
\end{equation*}
is a continuous function on $D_{n}$ \cite{ZA3}. Clearly $\left( \omega
_{n}\right) _{n>n_{0}}$ forms a decreasing sequence of plurisubharmonic
functions. For $k=1,2,...$we define a norm on $O\left( M\right) $ by:%
\begin{equation*}
\left[ f\right] _{k}\circeq \left( \int_{D_{k+n_{0}}}\left\vert f\right\vert
^{2}d\varepsilon \right) ^{\frac{1}{2}},\text{ \ \ }f\epsilon O\left(
M\right) .
\end{equation*}

We will denote the corresponding Hilbert spaces by $H_{k}$, $k=1,2,...$. The
norm system $\left( \left[ \ast \right] _{k}\right) _{k=0}^{\infty }$
generates the topology of $O\left( M\right) $. Denoting the dual norms by $%
\left( \left[ \ast \right] _{k}^{\ast }\right) _{k=0}^{\infty }$, there
exists, in view of our assumption, an index $n_{1}$ and $d>0$ so that,

\begin{equation*}
\forall k\text{ }\exists \text{ }C>0\text{ }:\text{ }\left[ .{}\right]
_{n_{1}}^{\ast }\leq C\left( \left[. {}\right] _{k}^{\ast }\right) ^{\frac{d%
}{1+d}}\left( \left[. {}\right] _{0}^{\ast }\right) ^{\frac{1}{1+d}}.
\end{equation*}

Fix an $m>n_{1}.$ The inclusion $\iota _{m}:H_{m}\hookrightarrow H_{0}$,
being a compact continuous operator, can be represented as 
\begin{equation*}
\iota _{m}\left( x\right) =\sum\limits_{n}\lambda _{n}\langle x,f_{n}\rangle
_{m}e_{n}\text{, \ \ \ }\forall n,\text{ \ }\lambda _{n}\geq 0\text{ \ },%
\text{ }\lim \lambda _{n}=0,
\end{equation*}
for some orthonormal sequences $\left( f_{n}\right) _{n}$, $\left(
e_{n}\right) _{n}$ of $H_{m}$ and $H_{0}$ respectively. Let 
\begin{equation*}
d_{n}\circeq -\ln \lambda _{n},\text{ }n=1,2,...
\end{equation*}

We will regard, $\iota _{m}$ as inclusion and identify $f_{n}$ with $\lambda
_{n}e_{n},$ $\ \ n=1,2,...$. It is shown in \cite{AY3} (\cite{ZA3}) that $%
\left( e_{n}\right) _{n}$ forms a basis of $O\left( D_{m+n_{0}}\right) $ and
that this space can be represented as a finite center of the Hilbert scale
generated by $H_{m}$and $H_{0}.$ Moreover the coordinate functionals $\left(
e_{n}^{\ast }\right) _{n}$ on $O\left( M\right) $ satisfy 
\begin{equation*}
\left[ e_{n}^{\ast }\right] _{m}^{\ast }=e^{-d_{n}},\ n=1,2,\ .
\end{equation*}

In view of Proposition I.11 of \cite{AY3} (\cite{ZA3}), the relative
extremal function can be represented as: 
\begin{equation*}
1+\omega _{n_{0}+m}\left( z\right) =\lim \sup_{\xi \rightarrow z}\lim
\sup_{n}\frac{\ln \left\vert e_{n}\left( \xi \right) \right\vert }{d_{n}}\ \
\ \ \forall z\epsilon D_{n_{0}+m}\setminus K_{n_{0}}.
\end{equation*}

Fix an $\beta ,$ with $0$ $<\beta <\frac{d}{1+d}.$ In view of Hartogs lemma $%
\left( \text{Theorem 2.6.4 \cite{K}}\right) $:

\begin{equation*}
\forall \epsilon >0\text{ \ }\exists \text{ }C>0\text{ }:\text{ \ \ }%
\left\vert e_{n}\right\vert _{K_{\beta }}\leq Ce^{\beta d_{n}}
\end{equation*}
where $\left\vert {.}\right\vert _{_{\Gamma _{\beta }}}$ denotes the sup
norm on the precompact sub-level set 
\begin{equation*}
\Gamma _{\beta }\circeq \left( z\epsilon D_{n_{0}+m\text{ \ \ }}:1+\omega
_{n_{0}+m}\left( z\right) \leq \beta \right) .
\end{equation*}

For a given $f\epsilon O\left( M\right) $ we estimate on $\Gamma _{\beta }$ $%
:$%
\begin{align*}
\left\vert f\left( z\right) \right\vert & \leq \sum\limits_{n}\left\vert
e_{n}^{\ast }\left( f\right) \right\vert \left\vert e_{n}\left( z\right)
\right\vert \leq C\sum\limits_{n}\left[ e_{n}^{\ast }\right] _{n_{1}}^{\ast }%
\left[ f\right] _{n_{1}}e^{\beta d_{n}} \\
& \leq C\sum\limits_{n}\left( \left[ e_{n}^{\ast }\right] _{m}^{\ast
}\right) ^{\frac{d}{1+d}}\left( \left[ e_{n}^{\ast }\right] _{0}^{\ast
}\right) ^{\frac{1}{1+d}}\left[ f\right] _{n_{1}}e^{\beta d_{n}} \\
& \leq \widetilde{C}\left( \sum\limits_{n}e^{\left( \beta -\text{ }\frac{d}{%
1+d}\right) d_{n}}\right) \left[ f\right] _{n_{1}}\leq \widehat{C}\left[ f%
\right] _{n_{1}}
\end{align*}%
since $\left( d_{n}\right) =O\left( n^{\frac{1}{\dim M}}\right) $ (\cite{ZA1}%
). Moreover from the definition of $\left[ .{}\right] _{n_{1}},$ there is a
constant $C$ which does not depend upon $f$ such that%
\begin{equation*}
\left[ f\right] _{n_{1}}\leq C\left\vert f\right\vert _{K_{n_{0}+n_{1}+1}}
\end{equation*}%
where $\left\vert .{}\right\vert _{K_{n_{0}+n_{1}+1}}$ denotes the sup norm
on $K_{n_{0}+n_{1}+1}.$ Hence we have the estimate 
\begin{equation*}
\exists \text{ }C_{1}>0:\left\vert f\right\vert _{\Gamma _{\beta }}\leq
C_{1}\left\vert f\right\vert _{K_{n_{0}+n_{1}+1}},\forall f\epsilon O\left(
M\right) ,
\end{equation*}%
between the sup norms. By considering powers, as usual, we can take $C_{1}=1,
$ and also taking into account that $K_{n_{0}+n_{1}+1}=\overline{%
D_{n_{0}+n_{1}+1}}$ is holomorphically convex in $M,$ we see that 
\begin{equation*}
K_{n_{0}}\subseteq \left( z\epsilon D_{n_{0}+m\text{ \ \ }}:1+\omega
_{n_{0}+m}\left( z\right) \leq \beta \right) \subseteq \overline{%
D_{n_{0}+n_{1}+1}}\subset \subset M
\end{equation*}%
for a fixed $\beta ,$ with $0$ $<\beta <\frac{d}{1+d}$ and for every $m$ $>$%
\ $n_{1}.$

We let 
\begin{equation*}
\omega ^{n_{0}}\circeq \lim_{m}\text{ }\omega _{n_{0}+m}.
\end{equation*}

Being the limit of a decreasing sequence of plurisubharmonic functions, $%
\omega ^{n_{0}}$ is a negative plurisubharmonic function on $M$ and is
identically $-1$ on $K_{n_{0}}.$ Moreover for any $\beta $ with $\ 0$ $%
<\beta <\frac{d}{1+d},$ we have: 
\begin{equation*}
K_{n_{0}}\subseteq \left\{ z\epsilon M:\omega ^{n_{0}}<\beta \right\}
\subseteq \overline{D_{n_{0}+n_{1}+1}}\subset \subset M.
\end{equation*}

$\left( \Leftarrow \right) $ In this part of the proof we will follow the
argument given in Th1 of \cite{AY3} rather closely. Using the notation fixed
at the beginning of the proof we fix a generating system for $O\left(
M\right) $ given by the norms, 
\begin{equation*}
\left\Vert f\right\Vert _{k}\circeq \left( \int_{D_{k}}\left\vert
f\right\vert ^{2}d\varepsilon \right) ^{\frac{1}{2}}
\end{equation*}

where $D_{k}$ =$\left( z\epsilon M:\sigma \left( z\right) \leq k\right) ,$ $%
k=1,2,..\ .$As usual we will use the notation $U_{k}$ to denote the unit
ball corresponding to $\left\Vert {}\right\Vert _{k}$, $k=1,2,...$.

Let $k_{0}\epsilon \mathbb{N}$ be given. By our assumption there is a
negative plurisubharmonic function $\rho $ on $M$ and $\alpha _{1}<0,$ such
that,%
\begin{equation*}
\overline{D_{k_{0}}}\subseteq \left( z\epsilon M:\rho \left( z\right)
<\alpha _{1}\right) \subset \subset M.
\end{equation*}

Choose negative numbers $\alpha _{0}<$ $\alpha _{1}$, $\alpha _{2}$ and $%
k_{1}\epsilon \mathbb{N},$ $k_{0}<<$ $k_{1}$ such that 
\begin{align*}
\overline{D_{k_{0}}}&\subseteq \left( z\epsilon M:\rho \left( z\right)
<\alpha _{0}\right) \subseteq \left( z\epsilon M:\rho \left( z\right)
<\alpha _{1}\right) \\
&\subset \subset D_{k_{1}}\subseteq \left( z\epsilon M:\rho \left( z\right)
<\alpha _{2}\right) .
\end{align*}
and let 
\begin{equation*}
\Omega _{-}\circeq D_{k_{1}},\text{ \ \ \ \ }\Omega _{+}\circeq \overline{%
\left( z\epsilon M:\rho \left( z\right) <\alpha _{1}\right) }^{c}.
\end{equation*}

For a fixed $t>0,$ we let, 
\begin{equation*}
\rho _{t}\left( z\right) \circeq -\frac{t}{\alpha _{0}}\rho \left( z\right)
+t.
\end{equation*}
Clearly $\rho _{t}$ is a bounded plurisubharmonic function on $M$. \ 

Fix an $f\epsilon O\left( M\right) $ with $\left\Vert f\right\Vert
_{k_{1}}\circeq \left( \int_{D_{k_{1}}}\left\vert f\right\vert
^{2}d\varepsilon \right) ^{\frac{1}{2}}\leq 1.$

For such an $f,$ we have the estimate,%
\begin{equation*}
\int_{\Omega _{-}\cap \Omega _{+}}\left\vert f\right\vert ^{2}e^{-\rho
_{t}}d\mu \leq C\sup_{w\epsilon \Omega _{-}\cap \Omega _{+}}e^{-\rho
_{t}\left( w\right) }\leq Ce^{-\lambda t}
\end{equation*}
for some $C>0$ where $\lambda \circeq 1-\frac{\alpha _{1}}{\alpha _{0}}.$

In view of Lemma 1 of \cite{AY3} we can decompose $f$ on $\Omega _{-}\cap
\Omega _{+}$ as $f=f_{+}+f_{-}$ with $\ f_{+}\epsilon O\left( \Omega
_{+}\right) ,$ $f_{-}\epsilon O\left( \Omega _{-}\right) ;$ moreover, 
\begin{equation*}
\int_{\Omega _{+}}\left\vert f_{+}\right\vert ^{2}e^{-\rho _{t}}d\varepsilon
\leq Ce^{-\lambda t},\text{ \ \ \ \ }\int_{\Omega _{-}}\left\vert
f_{-}\right\vert ^{2}e^{-\rho _{t}}d\varepsilon \leq Ce^{-\lambda t}
\end{equation*}
for some constant $C>0$ which is independent of $f$ and $t.$

Hence,%
\begin{equation*}
\int_{\Omega _{+}}\left\vert f_{+}\right\vert ^{2}d\varepsilon \leq
Ce^{t\left( 1-\lambda \right) },\text{ \ \ \ }\int_{\Omega _{-}}\left\vert
f_{-}\right\vert ^{2}d\varepsilon \leq Ce^{t\left( 1-\lambda \right) }
\end{equation*}

Taking into account that $\rho _{t}\leq 0$ on $D_{k_{0}}$, we also have;

\begin{equation*}
\int_{D_{k_{0}}}\left\vert f_{-}\right\vert ^{2}d\varepsilon \leq
\int_{D_{k_{0}}}\left\vert f_{-}\right\vert ^{2}e^{-\rho _{t}}d\varepsilon
\leq Ce^{-\lambda t}.
\end{equation*}

Set 
\begin{equation*}
F = \left\{ 
\begin{array}{cc}
f_{+} & \text{ on } \Omega _{+} \\ 
f-f_{-} & \text{ on } \Omega _{-}%
\end{array}
\right.
\end{equation*}

The function $F$ is analytic on $M$ and from above we see that there is a $%
K>0:$ 
\begin{equation*}
\int \left\vert F\right\vert ^{2}d\varepsilon \leq Ke^{t\left( 1-\lambda
\right) }.
\end{equation*}

Also from the above considerations we have $:$%
\begin{equation*}
\int_{D_{k_{0}}}\left\vert F-f\right\vert ^{2}d\varepsilon
=\int_{D_{k_{0}}}\left\vert f_{-}\right\vert ^{2}d\varepsilon \leq
Ce^{-\lambda t}
\end{equation*}

Now let 
\begin{equation*}
B\circeq \left( g\epsilon O\left( M\right) :\int \left\vert g\right\vert
^{2}d\varepsilon \leq 1\right) .
\end{equation*}

Setting $r=e^{t\left( 1-\lambda \right) },$ the analysis above can be
summarized as:%
\begin{equation*}
\forall \text{ }k_{0\text{ \ }}\exists \text{ }k_{1\text{ \ }}\text{and }C>0:%
\text{\ \ \ }U_{k_{1}}\subseteq \frac{1}{r^{\frac{\lambda }{1-\lambda }}}%
U_{k_{0}}+CrB\text{ \ \ }\forall r\geq 1.
\end{equation*}

Since the inclusion above is trivially true for $\ 0<r\leq 1$ we conclude
that $O\left( M\right) $ has the property $\widetilde{\Omega }.$

This finishes the proof of the theorem.
\end{proof}

\bigskip

\section{Concluding Remarks}

Although the assignment $M\rightarrow O\left( M\right) $ from Stein
manifolds, into Fr\'{e}chet spaces is not a complete invariant, often,some
complex potential theoretic properties of the given manifold $M$ can be
deduced from the knowledge of the type of the Fr\'{e}chet space $O\left(
M\right) .$ We will look for a case in point in the context of the property $%
\widetilde{\Omega }.$

Let $M$ be a Stein manifold and $z_{0}\epsilon M.$ Recall that the
pluricomplex Green function $g_{M}$ $\left( \ast ,z_{0}\right) $ of $M$ with
pole at $z_{0}$ is the plurisubharmonic function on $M\ $defined as:%
\begin{align*}
g_{M}\left( z,z_{0}\right) & =\lim \sup_{\xi \rightarrow z}\text{ }\{\sup
u\left( \xi \right) :\text{ }u\text{ }\epsilon PSH\left( M\right) \text{ }%
,u\leq 0,\text{ and } \\
& \text{(in the local coordinates) \ }u\left( w\right) -\log \left\Vert
w-z_{0}\right\Vert \leq O\left( 1\right) \text{ as }w\rightarrow z_{0}\}
\end{align*}%
(see \cite{K} and the references given there). In one variable it coincides
with the classical Green function and as is well known, they exists if and
only if the space is not parabolic. Moreover if it exists, it is harmonic
off its pole hence is a very "regular" function. The situation is rather
different in higher dimensions. (\cite{K}, p.232). For example, denoting the
unit disc by $\Delta $, if we look at the domain $\mathbb{C}\times \Delta $ $%
\subseteq \mathbb{C}^{2},$ we immediately see that $g_{\mathbb{C}\times
\Delta }\left( \left( z,w\right) ,\mathbf{0}\right) =\log \left\vert
w\right\vert ;$ so the pluricomplex Green function is identically $-\infty $
on the whole complex line $\mathbb{C\times }\left( 0\right) .$

Let us call a plurisubharmonic function $u:M\rightarrow \left[ -\infty
,\infty \right) $ \textit{semi-proper }in case there exists a number $c$
such that $\left( z\epsilon M:u\left( z\right) <c\right) $ is non-empty and
is relatively compact in $M.$ As a corollary of our theorem we have,

\begin{corollary}
Let $M$ be a Stein manifold and assume that $O\left( M\right) $ has the
property $\widetilde{\Omega }.$ Then for each $z_{0}\epsilon M,$ the
pluricomplex Green function
\end{corollary}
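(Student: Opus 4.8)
The plan is to show that $g_{M}(\cdot ,z_{0})$ is semi-proper, i.e.\ that $\{z\in M:g_{M}(z,z_{0})<c\}$ is nonempty and relatively compact for a suitable $c<0$; nonemptiness will be automatic since this set contains a neighbourhood of the pole $z_{0}$, so the real content is relative compactness. First I would record that $M$ cannot be parabolic: if it were, every bounded plurisubharmonic function on $M$ would be constant, contradicting the existence --- guaranteed by the Theorem applied to any compact neighbourhood of $z_{0}$ --- of a bounded plurisubharmonic function with a nonempty relatively compact sublevel set. Consequently $g_{M}(\cdot ,z_{0})\not\equiv -\infty $, and by the standard theory (cf.\ \cite{K}) it is a genuine negative plurisubharmonic function on $M$, strictly negative, maximal on $M\setminus \{z_{0}\}$, and with $g_{M}(w,z_{0})-\log \Vert w-z_{0}\Vert $ bounded above as $w\to z_{0}$.

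Next I would apply the Theorem to a compact neighbourhood $\overline{V}$ of $z_{0}$: there are a negative plurisubharmonic $\varphi $ on $M$ and an $\alpha <0$ with $\overline{V}\subseteq W:=\{\varphi <\alpha \}\subset \subset M$; one may also arrange $\varphi $ to be continuous and bounded below. Since $W$ is a relatively compact sublevel set of a continuous plurisubharmonic function it is hyperconvex, hence $g_{W}(\cdot ,z_{0})$ exists, is continuous on $W$, tends to $0$ at $\partial W$, and has the sharp logarithmic singularity $g_{W}(w,z_{0})=\log \Vert w-z_{0}\Vert +O(1)$ near $z_{0}$; moreover $g_{M}(\cdot ,z_{0})\leq g_{W}(\cdot ,z_{0})$ on $W$ by monotonicity in the domain.

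The heart of the argument is to manufacture from $\varphi $ and $g_{W}(\cdot ,z_{0})$ a negative plurisubharmonic ``competitor'' $u$ on $M$ that has the right singularity at $z_{0}$ and satisfies $\{u<c\}\subseteq W$. I would proceed in two steps. (i) Comparing $g_{M}(\cdot ,z_{0})$ with $g_{W}(\cdot ,z_{0})$ on the hyperconvex domain $W$, using that $g_{M}(\cdot ,z_{0})$ is maximal on $W\setminus \{z_{0}\}$, one shows that $g_{M}(\cdot ,z_{0})$ inherits the sharp logarithmic singularity at $z_{0}$; in particular $\{g_{M}(\cdot ,z_{0})=-\infty \}$ does not accumulate at $z_{0}$, so $g_{M}(\cdot ,z_{0})$ is finite, hence bounded below, on every small sphere $\partial B(z_{0},\rho )$. (ii) Fix such a small $\rho $ with $\overline{B(z_{0},\rho )}\subseteq V$ and pick a constant $c_{1}<0$ so negative that $\varphi +c_{1}<g_{M}(\cdot ,z_{0})$ on a collar around $\partial B(z_{0},\rho )$ --- possible by (i) together with $\varphi <\alpha $ on $\overline{V}$. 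Then
\[
u:=\begin{cases} g_{M}(\cdot ,z_{0}) & \text{on }B(z_{0},\rho ),\\[2pt] \max\bigl(g_{M}(\cdot ,z_{0}),\ \varphi +c_{1}\bigr) & \text{on }M\setminus \overline{B(z_{0},\rho )},\end{cases}
\]
is a well-defined negative plurisubharmonic function on $M$ which coincides with $g_{M}(\cdot ,z_{0})$ near $z_{0}$, hence is a competitor in the extremal definition of $g_{M}(\cdot ,z_{0})$. Therefore $u\leq g_{M}(\cdot ,z_{0})$, which forces $\varphi +c_{1}\leq g_{M}(\cdot ,z_{0})$ on $M\setminus \overline{B(z_{0},\rho )}$.

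Finally I would conclude: if $g_{M}(z,z_{0})<\alpha +c_{1}$, then either $z\in \overline{B(z_{0},\rho )}\subseteq W$, or $\varphi (z)+c_{1}\leq g_{M}(z,z_{0})<\alpha +c_{1}$, so $\varphi (z)<\alpha $ and again $z\in W$. Hence $\{z\in M:g_{M}(z,z_{0})<\alpha +c_{1}\}\subseteq W\subset \subset M$, and this set is nonempty, so $g_{M}(\cdot ,z_{0})$ is semi-proper. The main obstacle is step (i): verifying that $g_{M}(\cdot ,z_{0})$ has the sharp logarithmic singularity at $z_{0}$, since without it one cannot choose $c_{1}$ and the gluing defining $u$ collapses (note that on the parabolic-tainted example $\mathbb{C}\times \Delta $ this sharpness fails, which is consistent with that manifold not lying in the class). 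This is precisely where the hyperconvexity of the neighbourhood $W$ supplied by property $\widetilde{\Omega }$ is indispensable, and the comparison on $W$ has to be set up with some care to avoid circularity with the very conclusion being sought.
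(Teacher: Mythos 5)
Your overall strategy---use the Theorem to get a bounded plurisubharmonic $\varphi$ with a precompact sublevel set $W$ around $z_{0}$, glue a local Green function to a rescaled copy of $\varphi$ to build a global negative plurisubharmonic competitor with a logarithmic pole, and then invoke the extremal definition of $g_{M}(\cdot,z_{0})$---is exactly the paper's strategy. But your execution has a genuine gap, and it is the one you yourself flag: step (i) is circular and your gluing in step (ii) depends on it. You glue $g_{M}(\cdot,z_{0})$ itself to $\varphi+c_{1}$, which requires $g_{M}(\cdot,z_{0})$ to be bounded below on a collar around $\partial B(z_{0},\rho)$; but a lower bound for $g_{M}(\cdot,z_{0})$ near $z_{0}$ can only come from exhibiting a global competitor with a log pole, which is precisely what the gluing is supposed to produce. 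Non-parabolicity does not break the circle: $\mathbb{C}\times\Delta$ carries nonconstant bounded plurisubharmonic functions, yet $g_{\mathbb{C}\times\Delta}(\cdot,\mathbf{0})\equiv-\infty$ on $\mathbb{C}\times\{0\}$, so ``$M$ not parabolic'' does not yield finiteness of $g_{M}$ on small spheres, nor the sharp singularity. A secondary unjustified step is the assertion that $\varphi$ ``may be arranged continuous'' and that $W=\{\varphi<\alpha\}$ is hyperconvex; the Theorem provides no regularity for $\varphi$, and you use continuity both for hyperconvexity of $W$ and implicitly for the collar estimate.

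The paper breaks the circle by never touching $g_{M}$ inside the gluing. It chooses a strictly pseudoconvex $D\subset\subset M$ with $\{\widehat{\sigma}<\alpha\}\subset\subset D$ and works with $\rho=g_{D}(\cdot,z_{0})$, which is known a priori to have a genuine log pole, to be finite off $z_{0}$, and to be regular in the sense that $e^{\rho}$ is continuous on $\overline{D}$ (Corollary 6.2.3 of \cite{K}); this is what replaces both your continuity assumption on $\varphi$ and your step (i). One then picks sublevel sets $\{\rho<r_{1}\}\subseteq K\subseteq\{\widehat{\sigma}<\alpha\}\subset\subset\{\rho<r_{2}\}\subset\subset D$, glues $\rho$ to the affine rescaling $\Phi=\frac{r_{2}-r_{1}}{\alpha}\widehat{\sigma}+r_{1}$ on the annulus via Corollary 2.9.15 of \cite{K}, extends by $\Phi$ outside $\{\rho<r_{2}\}$, and obtains a bounded plurisubharmonic $u$ on $M$ that equals $g_{D}(\cdot,z_{0})$ near $z_{0}$. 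Then $u-\sup_{M}u$ is a legitimate competitor, so $g_{M}(\cdot,z_{0})\geq u-\sup_{M}u$, and semi-properness plus $g_{M}(\cdot,z_{0})^{-1}(-\infty)=\{z_{0}\}$ follow from the corresponding properties of $u-\sup_{M}u$. If you replace $g_{M}$ by $g_{D}$ for such a $D$ in your construction (and drop the hyperconvexity claim for $W$), your argument becomes essentially the paper's proof.
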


$g_{M}$ $\left( \ast ,z_{0}\right) $ is semi proper and satisfies $g_{M}$ $%
\left( \ast ,z_{0}\right) ^{-1}\left( -\infty \right) =\left( z_{0}\right) .$

\begin{proof}
Fix $z_{0}\epsilon M,$ and choose a compact set $K$ containing $z_{0}$ in
its interior$.$ In view of the theorem above, there exists a negative
plurisubharmonic function $\sigma $ on $M$ and $c>0$ such that 
\begin{equation*}
K\subseteq \left( z\epsilon M:\sigma \left( z\right) <-c\right) \subset
\subset M.
\end{equation*}

Let $-c^{+}\circeq \max_{z\epsilon K}\sigma \left( z\right) $ and set $%
\widehat{\sigma }\circeq \sigma +c^{+}.$ We choose a strictly pseudoconvex, $%
D\subset \subset M$ with%
\begin{equation*}
K\subseteq \left( z\epsilon M:\widehat{\sigma }\left( z\right) <0\right)
\subseteq \left( z\epsilon M:\widehat{\sigma }\left( z\right) <\alpha
\right) \subset \subset D\subset \subset M
\end{equation*}
where $\alpha \circeq c^{+}-c.$ We let $\rho \circeq $ $g_{D}$ $\left( \ast
,z_{0}\right) .$ The plurisubharmonic function $\rho $ is a nice function,
in the sense that $e^{\rho }$ is continuous on $\overline{D}$ (\cite[%
Corollary 6.2.3]{K}). We fix $r_{1}<r_{2}<0$ so that 
\begin{equation*}
\left( z\epsilon D:\rho <r_{1}\right) \subseteq K\subseteq \left( z\epsilon
M:\widehat{\sigma }\left( z\right) <\alpha \right) \subset \subset \left(
z\epsilon D:\rho <r_{2}\right) \subset \subset D\subset \subset M.
\end{equation*}

Finally set%
\begin{equation*}
\Phi \circeq \left( \frac{r_{2}-r_{1}}{\alpha }\right) \widehat{\sigma }%
+r_{1}.
\end{equation*}

We will consider the open sets%
\begin{equation*}
U\circeq \left( z\epsilon D:\rho <r_{2}\right) ,\text{ \ \ }V\circeq 
\overline{\left( z\epsilon D:\rho <r_{1}\right) }^{c}\cap \left( z\epsilon
D:\rho <r_{2}\right)
\end{equation*}
of $D.$ \ For any $z\epsilon \partial V\cap U$, $\lim \sup_{\xi \rightarrow
z}\Phi \left( \xi \right) \leq \rho \left( z\right) $, by construction.
Hence in view of Corollary 2.9.15 \cite{K}, the function $u$ defined by; 
\begin{equation*}
u\circeq \left\{ 
\begin{array}{cc}
\max \left( \rho ,\Phi \right) & \text{ on }V \\ 
\rho & \text{ on }U-V%
\end{array}%
\right.
\end{equation*}
is a plurisubharmonic function on $U\circeq \left( z\epsilon D:\rho
<r_{2}\right)$.

Moreover on $\overline{\left( z\epsilon M:\widehat{\sigma }\left( z\right)
<\alpha \right) }^{c}\cap \left( z\epsilon D:\rho <r_{2}\right) $, $\max
\left( \rho ,\Phi \right) =\Phi .$ Hence we can extend $u$ to a bounded
plurisubharmonic function on whole of $M$ by setting $u$ to be equal to $%
\Phi $ outside $\left( z\epsilon D:\rho <r_{2}\right) .$ Now $u-\sup_{M}u,$
is a semi-proper negative plurisubharmonic function and since near $z_{0},$
it is equal to $g_{D}$ $\left( \ast ,z_{0}\right) -\sup_{M}u,$ 
\begin{equation*}
g_{D}\left( \ast ,z_{0}\right) \geq u-\sup_{M}u\text{ }
\end{equation*}
on $M.$ From this, it follows that $g_{D}\left( \ast ,z_{0}\right) $ is a
semi-proper plurisubharmonic function with $g_{D}\left( \ast ,z_{0}\right)
^{-1}\left( -\infty \right) =\left( z_{0}\right) .$ This finishes the proof
of the corollary.
\end{proof}

\bigskip

We would like to finish this note by looking at two simple, yet typical
examples.

The first example we want to look at is the punctured unit disc, $\Delta
-\left\{ 0\right\} .$ Since every bounded plurisubharmonic function on it
extends to a bounded plurisubharmonic function on the unit disc, it is not
possible to put, say $K=\left( z\epsilon \mathbb{C}:\left\vert z\right\vert =%
\frac{1}{2}\right) ,$ into a precompact sublevel set of a bounded
plurisubharmonic function on $\Delta -\left\{ 0\right\} $ in view of the
maximum principle for plurisubharmonic functions. Actually it is not
difficult to see that $O\left( \Delta -\left\{ 0\right\} \right) $ is
isomorphic to $O\left( \Delta \right) \times O\left( \mathbb{C}\right) $ as
Fr\'{e}chet spaces. Hence $O\left( \Delta -\left\{ 0\right\} \right) $
admits $O\left( \mathbb{C}\right) $ as a quotient space and so can not have
the property $\widetilde{\Omega }.$

\bigskip

The second example we will look at is also a subdomain of the unit disc.
This time we will throw away infinite number of closed discs with radii
tending to zero along with the origin from the unit disc. To this end fix an 
$n_{0}$ such that the closed discs; 
\begin{equation*}
K_{n}\circeq \left( z\epsilon \mathbb{C}:\left\vert z-\frac{1}{e^{n}}%
\right\vert \leq \frac{1}{e^{\frac{1}{n^{3}}}}\right)
\end{equation*}
are disjoint for $n\geq n_{0}.$ Let\ 
\begin{equation*}
\Omega \circeq \Delta -\left( \bigcup_{n\geq n_{0}}K_{n}\cup \left\{
0\right\} \right) .
\end{equation*}

Fix a holomorphically convex smoothly bounded compact subset $K$ of $\Omega .
$ Choose a subdomain $\Theta $ of $\Omega $ obtained from $\Delta $ by
deleting only finite number of $K_{n}^{\prime }s$ defined above such that it
contains $K$ as a holomorphically convex $\left( \text{in }\Theta \right) ,$
compact subset. Since $\Theta $ is hyperconvex (\cite{K}, p 80), the
relative extremal function $\omega _{K}^{\Theta }$ of $K,$ $\left( \text{ in 
}\Theta \right) ,$ is a continuous function and $\left( z\epsilon \Theta
:\omega _{K}^{\Theta }\left( z\right) =-1\right) =K$ (\cite{ZA3}). For
constants $c$ near $-1,$ the corresponding sublevel sets of $\omega
_{K}^{\Theta }$ restricted to $\Omega ,$ are precompact in $\Omega $ and
certainly they contain $K.$ Since we can find an exhaustion of $\Omega $ by
such compact sets $K,$ the space $O\left( \Omega \right) $ has the property $%
\widetilde{\Omega }$, in view of the theorem above. However, $O\left( \Omega
\right) $ does not have the stronger property $\overline{\Omega }$. This
follows because the radii $\left( r_{n}\right) _{n}$ of the deleted discs
satisfy;%
\begin{equation*}
\sum \frac{n}{\ln \left( \frac{1}{r_{n}}\right) }<\infty ,
\end{equation*}%
and hence, by a result of Zaharyuta \cite{ZA4}, $O\left( \Omega \right)
\ncong O\left( \Delta \right) .$ In fact not much is known about the linear
topological properties of the Fr\'{e}chet space $O\left( \Omega \right) .$

\bigskip

\bigskip


\begin{thebibliography}{99}
\bibitem{AY3} Aytuna, A., On Stein manifolds $M$ for which $O(M)$ is
isomorphic to $O(\Delta ^{n})$, Manuscripta math.,V.62, Springer-Verlag $%
(1988)$, 297-315.

\bibitem{AY1} Aytuna, A., Stein Spaces M for which $O(M)$ is isomorphic to a
power series space,\textit{\ Advances in the theory of Fr\'{e}chet spaces},
Kluwer Acad. Publ., Dordrecht, $\left( 1989\right) $, 115-154.

\bibitem{AY2} Aytuna, A., Extension Operators for analytic functions defined
on certain closed subvarieties, Bull.Soc.math.France, 123, $\left(
1995\right) $, 243--255.

\bibitem{AS} Aytuna, A., Sadullaev, A., Parabolic Stein manifolds, $\left(
2012\right) $ to appear in Mathematica Scandinavica

\bibitem{DMV} Dineen,S., Meise, R., Vogt, D., Characterization of nuclear Fr%
\'{e}chet spaces in which every bounded set is polar, Bull.Soc.math.France,
112,$\left( 1984\right) $, 41-68.

\bibitem{DRA1} Dragilev, M.M., On regular convergence of basis expantions of
analytic functions, Nauchniye Doklady Visshey Shkoly 4 $(1958)$,

27-31. (in Russian).

\bibitem{DM} Dynin,A.,Mityagin, B., Criterion for nuclearity in terms of
approximate dimension, Bull. Polish Acad. Sci. Math., 8 $(1960)$, 535--540.

\bibitem{K} Klimek, M., Pluripotential Theory, Oxford Univ. Press, 1991.

\bibitem{V2} Meise, D. Vogt, \textit{Introduction to Functional Analysis},
Clarendon Press, $\left( 1997\right) $

\bibitem{MH} Mityagin, B., Henkin, G., Linear problems of complex analysis,
Russian Math, Surveys 26, $\left( 1972\right) $ 99-164.

\bibitem{TT1} Terzio\u{g}lu, T. On diametral dimension of some classes of
F-spaces, J.Karadeniz Uni. Ser. Math-Physics, 8,$\left( 1985\right) $ 1-13.

\bibitem{VW} Vogt, D.,Wagner.M. J., Charakterisierung der Quotientenriiume
von s und eine Vermutung von Martineau, Studia Math. 67 $(1980)$, 225-240.

\bibitem{V1} Vogt, D., Fr\'{e}chetr\"{a}ume, zwischen denen jede stetige
Abbildung beschr\"{a}nkt ist, J. Reine Angew. Math. 345 $(1983)$,182-200.

\bibitem{ZA4} Zakharyuta,V.,Spaces of functions of one variable, analytic in
open sets and on compacta. Math. USSR Sbornik 11, $\left( 1970\right) $,
75-88.

\bibitem{ZA2} Zakharyuta,V., Isomorphisms of spaces of analytic functions,
Sov. Math. Dokl. 22 $\left( 1980\right),$ 631-634.

\bibitem{ZA3} Zakharyuta,V., Spaces of analytic functions and pluripotential
theory, Linear Topol. Spaces Complex Anal. 1 $(1994)$, 74-146.

\bibitem{ZA1} Zakharyuta,V., Kolmogorov problem on widts, asymptotics and
pluripotential theory, Contemporary Mathematics vol. 481, $\left(
2009\right) $,.171-196
\end{thebibliography}
\end{document}